\documentclass{article}
\usepackage{amsmath,amsthm,amssymb,graphicx}
\usepackage{caption,subcaption,float}

\def\complex{{\mathbb C}}

\def\graph{{\mathcal G}}

\def\H{{\mathbb H}}

\def\vset{{\mathcal V}}
\def\eset{{\mathcal E}}

\def\dop{{\mathcal L}}




\title{A Quantum Graph FFT  \\
with applications to \\
partial differential equations on networks}
\author{Robert Carlson}




\newtheorem{thm}{Theorem}[section]
\newtheorem{cor}[thm]{Corollary}
\newtheorem{lem}[thm]{Lemma}
\newtheorem{prop}[thm]{Proposition}

\theoremstyle{definition}

\theoremstyle{remark}


\newcommand{\thmref}[1]{Theorem~\ref{#1}}

\newcommand{\lemref}[1]{Lemma~\ref{#1}}
\newcommand{\propref}[1]{Proposition~\ref{#1}}
\newcommand{\corref}[1]{Corollary~\ref{#1}}

 \numberwithin{equation}{section}



\begin{document}
\bibliographystyle{plain}
\date{2025}

\maketitle

\section{Abstract}

The  Fast Fourier Transform is extended to functions on finite graphs whose edges are identified with intervals of finite length.
Spectral and pseudospectral methods are developed to solve a wide variety of time dependent partial differential equations on domains which are 
modeled as networks of one dimensional segments joined at nodes.

\section{Introduction}

The Fast Fourier Transform (FFT) is an essential  algorithm for engineering and scientific computations.
Among the many applications are the spectral and pseudospectral methods for numerical solutions of
partial differential equations \cite{Forn,Hesthaven}.  When FFT-based spectral methods are directly applicable, they can provide remarkable
efficiency and accuracy.  While less remarkable,  FFT-based pseudospectral methods, which apply in broader contexts, can still offer important advantages, particularly
when relatively high frequency components are important.  This work introduces a quantum graph FFT algorithm (QGFFT),  
an important extension of the FFT with broad application to time dependent partial differential equations on networks. 

A great variety of natural and manufactured structures can be effectively modeled as networks of one dimensional segments joined at nodes. 
Examples range from neural and vascular networks,  river systems, and spider webs, to traffic networks, lattice and honeycomb structures, and carbon nanotubes \cite{Kuchment}.
In such settings partial differential equations (PDEs) on networks, including the heat, wave, and Schr\"odinger equations, and their linear and nonlinear 
variants, offer a powerful approach for the study of time dependent evolutions.  
  
The QGFFT is based on the spectral theory of a second derivative operator, $\dop f = -f'' $.
It has long been recognized \cite{ BK, Cattaneo, vonBelow} that the spectral theory of $\dop $ is dramatically simplified  
when a finite metric graph $\graph $ has all edges of equal length, taken here to be $1$, with standard Kirchhoff conditions at the vertices.
On each edge the eigenfunctions are elementary, while the sequence of eigenvalues has a simple arithmetic structure determined by a finite set of combinatorial data.  
Graphs $\graph$ with integer edge lengths can be reduced to the equilateral case by inserting vertices of degree two.
The QGFFT blends this equilateral graph spectral theory with the conventional FFT algorithm.

The QGFFT developed in this work offers significant advantages for computing solutions of many network PDEs.
For networks with integer edge lengths, constant coefficient heat, wave, and Schr\"odinger equations
can be solved with extraordinary accuracy and efficiency via the spectral method.  
Many other equations, including the Schr\"odinger equation with nonconstant potential, can be treated with a (Strang) splitting method.   
For networks with noninteger edge lengths, an elementary change of variables converts the original network to an equilateral one.
Time dependent PDEs, including those with nonconstant coefficients, can then be solved using a pseudospectral method, which
is particularly effective when accurate treatment of high frequency components is important.

Recent decades have seen the rapid and extensive development of quantum graphs \cite{BK,Kurasov}, motivating 
other studies of numerical techniques for solving PDEs on networks.
A linear finite element method focusing on metric graphs with Kirchhoff conditions at the vertices is introduced in \cite{Arioli}. 
 A library of functions using finite differences to approximate the Laplace operator is described in \cite{Besse}; the main demonstrations
are ground state calculations and dynamics of nonlinear Schr\"odinger equations.  Another library for quantum graph computations
is described in \cite{Goodman}; analysis of nonlinear Schr\"odinger equations is again an important motivation.
A spectral method based on highly accurate calculation of eigenvalues and eigenfunctions for the quantum graph Laplace operator, but without the
advantages of the FFT, is reported in \cite{Brio}. 

The QGFFT algorithm and illustrative applications are described in the subsequent sections.
The second section introduces equilateral quantum graphs and the regularity of their spectral theory.
The QGFFT algorithm is described.  For the benefit of readers who are not familiar with equilateral quantum graphs, 
some of the essential, but well known material, is reviewed.  This is followed by new results which link orthonormal eigenvectors 
from a discrete graph matrix with orthonormal quantum graph eigenfunctions.  These new results simplify the algorithm 
outlined in \cite{CarlsonFFT} considerably.

The final section describes QGFFT computations using Python software.  
Exactly solvable problems are used to test the accuracy of the spectral and pseudospectral methods,
to compare them with difference methods, and to consider the sensitivity of the pseudospectral method to 
edge lengths.  Various techniques are applied to the time dependent Schr\"odinger equation on networks.
The spectral method is used to treat the interaction of a wave packet with a geometric bound state.  
The Strang splitting method is used for the linear Schr\"odinger equation with added potential.
Finally, the pseudospectral method is used to treat the interaction of a wave packet with a geometric obstacle with unequal edge lengths.
Additional applications of the spectral method to linear and nonlinear variants of the heat and wave equation as well as the Schr\"odinger equation are treated in
an earlier manuscript \cite{CarlsonArXiv}

It is a pleasure to acknowledge helpful conversations with Denis Silantyev.

\section{Equilateral quantum graphs}

\subsection{Eigenfunction expansion}

Let $\graph $ denote a finite graph with vertex set $\vset $ having $N_{\vset}$ vertices, and edgeset $\eset $ with $N_{\eset}$ edges.
$\graph $ is assumed to be connected and simple (no loops or multiple edges joining a pair of vertices).
All vertices have degree at least two.  (In some cases \cite{CarlsonArXiv} this restriction can be circumvented.)
In the usual manner of equilateral  quantum graphs, the edges of $\graph $ are identified with the 
interval $[0,1]$.  One may then define the Hilbert space $L^2(\graph ) = \oplus_{e \in \eset}L^2(e) $   
with inner product 
\[ \int_{\graph } f(x) \overline{g(x)} \ dx = \sum _{e \in \eset} \int_e  f_e(x) \overline{g_e(x)} \ dx ,\]
where each component $f_e:[0,1] \to \complex $ is a function in the usual Lebesgue Hilbert space $L^2[0,1]$ of square integrable functions.  

On a domain of sufficiently differentiable functions there is a self-adjoint differential operator $\dop $ which acts by $\dop f = - f''$.
The domain of $\dop $ is defined using a standard set of (Kirchhoff) vertex conditions which require $f$ to be continuous on $\graph $.
In addition, if $v \in \vset$ and local coordinates for the edges incident on $v$ identify $v$ with $0$, functions in the domain of $\dop $
must satisfy
\begin{equation} \label{derive}
\sum _{e \sim v} f'(v) = 0.
\end{equation}   
These vertex conditions are common for the heat, wave and Schr\"odinger equations.
Details about the precise domain and self-adjointness of $\dop $ are presented in \cite{BK} and \cite{Kurasov}.
The main consequence is that $\dop $ has a discrete spectrum consisting of nonnegative eigenvalues
$0 = \lambda_0 < \lambda _1 \le \lambda _2 \le \dots $, $\lambda _{\eta} \to \infty $, with a complete orthonormal basis
of eigenfunctions.

The eigenvalues and eigenfunctions of $\dop $ on an equilateral graph can be presented in a particularly effective manner.  The main ideas are sketched here,
with more details provided below.  To begin, let $0 = \lambda_0 < \lambda _1  \le \dots \le \lambda _K \le (2\pi)^2 $ denote the eigenvalues of $\dop $, listed with multiplicity, which are less than or equal to $(2\pi )^2$.
For $k = 0,\dots , K$ define {\it fundamental frequencies} $\omega _{k,0} = \sqrt{\lambda _k} \ge 0$.
The eigenvalue $0$, with constant eigenfunctions, is a special case.  For $k = 1,\dots , K$ and positive integers $m$, the {\it higher frequencies} are $\omega _{k,m} = \omega _{k,0} + 2m\pi $.
The eigenvalues of $\dop $ are precisely $\{ 0, \omega _{k,m}^2  \ | \  k = 1,\dots , K; m = 0,1,2,\dots \}$.

An orthonormal set of eigenfunctions $\{ \Psi _{k,0} \ | \  k = 0,\dots , K \}$ with eigenvalues $\lambda _k$ is constructed first.
On an (oriented) edge $e$ of $\graph $ the eigenfunction $\Psi _{k,0}$ has the form
\begin{equation} \label{fundefunc}
\Psi _{k,0}(x) = \gamma _{k,e}\exp(i\omega _{k,0} x) + \delta _{k,e} \exp(-i \omega _{k,0} x) , \quad 0 \le x\ \le 1. 
\end{equation}
For  $k = 1,\dots , K$ and $m = 0,1,2, \dots $ the functions $\Psi _{k,m}$ are then defined to have values on $e$ given by
\[\Psi _{k,m}(x) = \gamma _{k,e}\exp(i\omega _{k,m} x) + \delta _{k,e} \exp(-i \omega _{k,m} x) ,  \quad \omega _{k,m} = \omega _{k,0} + 2\pi m . \]
The functions $\Psi _{k,m}$ are eigenfunctions of $\dop$ which comprise a complete orthonormal basis for $L^2(\graph )$.
The Fourier coefficients of $f \in L^2(\graph )$ are
\begin{equation} \label{alphadef}
\alpha _{k,m} = \int_{\graph }f\overline{\Psi _{k,m}} 
\end{equation}
\[ = \sum_{e \in \eset} \int_0^1 f_e(x)\bigl [ \overline{\gamma _{k,e}\exp(i\omega _{k,m} x) + \delta _{k,e} \exp(-i \omega _{k,m} x)}\bigr ] \  dx .\]

For numerical calculations the eigenfunction expansion is truncated and Fourier coefficients $\alpha _{k,m} $ are approximated using uniform sampling and the trapezoidal rule.  
To take advantage of the traditional FFT algorithm \cite[p. 379-399]{Briggs} for computation of the discrete Fourier transform, assume $N = 2^J$ for a nonnegative integer $J$. 
For each edge the $N+1$ sample points are $x_n = n/N$ for $n = 0,\dots ,N$.  Discrete Fourier coefficients are defined as
\begin{equation} \label{betadef}
\beta _{k,m} =  \frac{1}{2N} \sum_{e \in \eset} [ f_e(0)\overline{ \Psi_{k,m} }(0) + f_e(1)\overline{ \Psi_{k,m} }(1) + 2\sum_{n=1}^{N-1} f_e(\frac{n}{N})\overline{\Psi_{k,m}(\frac{n}{N})}] 
\end{equation}
\[ =  \frac{1}{2N}\sum_{e \in \eset} \overline{\gamma _{k,e}} \bigl [ f_e(0) + f_e(1)e^{-i\omega _{k,0}} + 2\sum_{n=1}^{N-1} f_e(\frac{n}{N}) e^{-i\omega _{k,0}n/N} e^{-2\pi i mn/N} \bigr ] \]
\[ +  \frac{1}{2N}\sum_{e \in \eset} \overline{\delta _{k,e}} \bigl [ f_e(0) + f_e(1)e^{i\omega _{k,0}} + 2\sum_{n=1}^{N-1} f_e(\frac{n}{N})e^{i\omega _{k,0}n/N}e^{2\pi imn/N} \bigr ] \]
 In fact, only the values $m = 0,\dots ,N/2 -1$ are needed.
Fixing $k$, the coefficient sequence $\{ \beta _{k,m} |  m = 0,\dots , N/2 - 1 \}$ 
can be evaluated using the terms
\[ f_e(0) + f_e(1)e^{-i\omega _j} +\sum_{n=1}^{N-1} f_e(\frac{n}{N}) e^{-i\omega _jn/N} e^{-2\pi i mn/N},\] 
and 
\[f_e(0) + f_e(1)e^{i\omega _j} + \sum_{n=1}^{N-1} f_e(\frac{n}{N})e^{i\omega _{k,0}n/N}e^{2\pi imn/N}. \]
The first of these is a  discrete Fourier transform of the sequence 
\[f_e(0) + f_e(1)e^{-i\omega _j} , f_e(\frac{1}{N})e^{-i\omega _{k,0}1/N}, \dots , f_e(\frac{N-1}{N} )e^{-i\omega _{k,0}(N-1)/N}  ,\]
and similarly for the second.

The inversion process recovers the sample values $f_e(n/N)$ from the coefficients $\beta _{k,m}$.
There are exceptional terms for frequencies $\omega = 0,N/2$.  On an edge $e$,
\begin{equation} \label{IFFT}
f_e(n/N) = \sum_{k,m} \beta _{k,m}\Psi _{k,m}(n/N)  
\end{equation}
\[ = c_0  + c_{N/2}\cos(n\pi ) + \sum_{k=1}^K e^{i\omega _{k,0} n/N} \gamma _{k,e} \sum_{m = 0}^{N/2 -1} \beta _{k,m}e^{2\pi imn/N} \]
\[+ \sum_{k=1}^K e^{-i\omega _{k,0}n/N} \delta _{k,e} \sum_{m = 0}^{N/2 -1} \beta _{k,m} e^{ -2\pi imn/N}  . \] 
The traditional FFT can be used again.  Notice here that with $k$ fixed the $N/2$ terms  $\beta _{k,m} $ are used to produce 
$N+1$ sample values $n/N$ on $e$.

\subsection{The quantum graph FFT algorithm}

The QGFFT is an accurate and efficient method of computing the coefficients $\alpha _{k,m}$
for a suitable partial sum of the expansion.  An inverse transform algorithm is also given.
The QGFFT and its inverse take advantage of the traditional (Python) FFT algorithm. 
With each edge identified with $[0,1]$, input functions are defined by their values at the
$N+1 = 2^J+1$ uniformly spaced sample points, $x_m = m/N$, $m=0,\dots ,N$.
The algorithm uses several standard computations from linear algebra; Python versions were used.   
The main steps of the algorithm are as follows.

1. As preliminary steps, the input graph $\graph $ is represented by a (weighted) adjacency matrix $A_0$ with nonnegative integer entries representing the edge lengths.
Edges with length greater than one are replaced by paths with edges of length $1$.  
Each (directed) edge is then represented by a pair of adjacent vertices (smaller vertex index first).

2. As a first linear algebra step, begin by constructing the equilateral graph adjacency matrix $A_1$ and diagonal degree matrix $T$.  
The orthonormal eigenvectors  and eigenvalues $\nu $, listed with multiplicity, of the real symmetric matrix $I - T^{-1/2}A_1T^{-1/2}$ are computed.
Multiplication by $T^{-1/2}$ converts the initial eigenvectors to eigenvectors of $I - T^{-1}A_1$.
Except for the values $\nu = 0,2$, eigenvalues $\nu$ will be subsequently converted to fundamental frequencies $\omega $ and eigenvalues $\lambda = \omega ^2$ of the quantum graph.  

3. For the second linear algebra  step the quantum graph eigenvalues $n^2\pi ^2$ with $n =0,1,2$  get a separate treatment.
The frequency $0$ is always present, with constant eigenfunctions.  
An eigenfunction with frequency $\pi$ has the form $A_e\cos(\pi x) + B_e\sin (\pi x)$ on each directed edge $e$.
Global eigenfunctions must satisfy the continuity conditions and the derivative condition at each vertex.  Using a matrix $M$ encoding the vertex conditions,
eigenfunctions with frequency $\pi$ can be identified with solutions of an equation $MX = 0$.  The  frequency $2\pi $ case is handled similarly.      
 
4. Except for the values $\nu = 0,2$, eigenvalues $\nu$ are converted to fundamental frequencies $\omega $ of the quantum graph.  
This is a one to two conversion; fundamental frequencies $\omega _1,\omega _2$ are given by $\omega _1 = \cos ^{-1}(1 - \nu )$ with $0 < \omega _1 < \pi$, and $\omega _2 = 2\pi - \omega _1$. 
For each edge and each fundamental frequency, quantum graph eigenfunction coefficients are computed.
Using the data from step 2, let $a$ be the value of a $I - T^{-1}A_1$ eigenvector at vertex $v$ and $b$ the value at the adjacent vertex $w$.
For $\nu \not= 0,2$, or $\omega \not= n\pi$, a quantum graph eigenfunction has the form $c_1\cos (\omega x) + c_2\sin(\omega x)$ with
$c_1 = a$, $c_2 = (b - a\cos(\omega ))/\sin (\omega ) $.
The coefficients $c_1,c_2$ are converted to coefficients for the exponential functions $\exp(\pm i \omega x)$.

For each edge and quantum graph eigenvalues $\pi ^2$, $4\pi ^2$,  listed with multiplicity, the quantum graph eigenfunctions found in step 3 solutions are also represented 
by coefficients of $\exp(\pm i \omega x)$.  The eigenvalue $\lambda = 0$ is treated as a special case.

5. Two  traditional (Python) FFTs are used to compute the coefficients $\beta _{k,m}$ as indicated in \eqref{betadef}.
The output is an array of $N/2$ coefficients for each fundamental frequency $\omega _{k,0}$.

6. To compute the inverse QGFFT, the coefficients for each fundamental frequency are extended by zeros to length $N$ and a traditional FFT 
is performed.  As indicated in \eqref{IFFT} this data is then summed over the fundamental frequencies to output the spatial samples.

\section{Supporting analysis}

Equilateral quantum graphs were studied in \cite{Cattaneo, vonBelow}, with additional material in \cite{BK, CarlsonFFT, Kurasov}.  
Relevant portions are presented below to explain some of the claims above.  
When a QGFFT was discussed in \cite{CarlsonFFT}, the author believed that the eigenfunctions $\Psi _{k,m}$ might not be orthonormal in all cases.  
The justifications in \thmref{eqdot}, \thmref{inprod2}, and \corref{freqinc} below seem to be new.
   
\subsection{Eigenvalues and eigenvectors}

Let $E(\lambda )$ denote the eigenspace for $\dop $ with eigenvalue $\lambda $, with $y(x) \in E(\lambda )$.
For $\omega = \sqrt{\lambda } > 0 $ and nonnegative integers $m$, define the linear mapping $S_m:E(\omega ^2) \to E([\omega +2m\pi ]^2)$ by the frequency shift
$S_my(x,\omega ^2) = y(x,[\omega +2m \pi]^2)$.  On each edge $-D^2 y(x,[\omega + 2m\pi ]^2) = [\omega + 2m\pi ]^2 y(x,[\omega + 2m\pi ]^2)$.
It is easy to check that $y(x,[\omega  + 2m\pi ]^2) $ is continuous on $\graph $ and satisfies \eqref{derive}.
Consequently, except for $E(0)$, all $\dop $ eigenspaces $E(\lambda )$ are generated by frequency shifts of the eigenspaces with eigenvalues $0 < \lambda  \le (2\pi )^2$,
as the next result shows. 

\begin{prop} \label{rooteval}
Suppose $\graph $ is a finite simple connected equilateral graph.

The eigenspace $E(0)$ is spanned by the constants. 
Suppose $\omega ^2$ is an eigenvalue of $\dop $, with $0 < \omega \le 2\pi $.  
If $m$ is a nonnegative integer, then $S_m:E(\omega ^2) \to E([\omega +2m\pi ]^2)$ is an isomorphism of eigenspaces.
\end{prop}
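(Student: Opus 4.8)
The plan is to settle the kernel $E(0)$ directly, and then to prove the isomorphism claim by producing an explicit two-sided inverse for $S_m$ rather than by counting dimensions. For the first assertion, any $y \in E(0)$ satisfies $-y_e'' = 0$ on each edge, so every component is affine. Integrating by parts edge-by-edge over $\graph$ and using continuity together with the derivative conditions \eqref{derive} to cancel the vertex boundary terms yields $\int_{\graph}|y'|^2 = \langle \dop y, y\rangle = 0$, so $y' \equiv 0$; since $\graph$ is connected, $y$ reduces to a single constant. Conversely, constants are continuous and have vanishing edge derivatives, hence satisfy \eqref{derive}, so $E(0)$ is exactly the constants.

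For the second assertion I would first recall the forward direction flagged above as routine, since it contains the decisive computation. On an oriented edge $e$ an eigenfunction of eigenvalue $\omega^2$ reads $y_e(x) = \gamma_e\exp(i\omega x) + \delta_e\exp(-i\omega x)$, and $S_m$ replaces $\omega$ by $\omega + 2m\pi$ while retaining the coefficients $\gamma_e,\delta_e$. Because $\exp(\pm 2m\pi i) = 1$, the shift leaves both endpoint values untouched, $S_m y_e(0) = y_e(0)$ and $S_m y_e(1) = y_e(1)$, and it multiplies the edge derivative at each endpoint by the single factor $(\omega + 2m\pi)/\omega$ — the same factor at both ends, again because $\exp(\pm 2m\pi i)=1$. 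Thus continuity transfers verbatim and every Kirchhoff sum in \eqref{derive} is merely rescaled by this nonzero constant, so $S_m y \in E([\omega+2m\pi]^2)$; linearity of $S_m$ is immediate.

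To upgrade this to an isomorphism I would introduce the reverse frequency shift $R$, carrying $z_e(x) = \gamma_e\exp(i[\omega+2m\pi]x) + \delta_e\exp(-i[\omega+2m\pi]x)$ in $E([\omega+2m\pi]^2)$ back to $\gamma_e\exp(i\omega x) + \delta_e\exp(-i\omega x)$. The identical endpoint-value and derivative-scaling argument, now with constant $\omega/(\omega+2m\pi)$, shows $R$ maps $E([\omega+2m\pi]^2)$ into $E(\omega^2)$. Since $\omega > 0$ and $\omega + 2m\pi > 0$, the exponentials $\exp(\pm i\omega x)$, respectively $\exp(\pm i[\omega+2m\pi]x)$, are linearly independent on $[0,1]$, so $\gamma_e,\delta_e$ are uniquely recovered from each edge component; this is precisely what makes $S_m$ and $R$ well defined on functions rather than on coefficient arrays. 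Comparing coefficients then gives $R\circ S_m = \mathrm{id}$ and $S_m\circ R = \mathrm{id}$, so $S_m$ is a linear bijection with inverse $R$, i.e. an isomorphism of eigenspaces.

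I expect no serious obstacle, as every computation is elementary. The one point demanding care is the well-definedness just used — that the edge coefficients are genuinely determined by the function, which is exactly where $\omega > 0$ (excluding the degenerate frequency handled separately in $E(0)$) enters — together with verifying that the derivative-rescaling factor is independent of both the edge and the choice of endpoint, so that it factors cleanly out of each Kirchhoff sum regardless of edge orientation.
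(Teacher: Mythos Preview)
Your proof is correct and follows essentially the same line as the paper's: handle $E(0)$ via the identity $\int_{\graph}(\dop f)\overline{f}=\int_{\graph}|f'|^2$, then for $\omega>0$ use that the basis functions at frequencies $\omega$ and $\omega+2m\pi$ are linearly independent on each edge to conclude that the forward and backward frequency shifts are well-defined mutual inverses. The paper phrases the last step as ``both $S_m$ and $S_m^{-1}$ have kernel $0$'' using the $\cos/\sin$ basis, whereas you work in the exponential basis and explicitly verify that endpoint values are preserved and Kirchhoff sums are uniformly rescaled; this is the same argument with more detail supplied.
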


\begin{proof}
For $E(0)$ one notes that for $f$ in the domain of $\dop$,
\[\int_{\graph } (\dop f)\overline{f}  = \int_{\graph} |f'|^2,\]  
which is strictly positive unless $f$ is constant.

Now consider $0 < \omega \le 2\pi $. 
Since $\omega $ and $\omega +2m\pi$ are both nonzero, the functions $\cos(\omega x) ,\sin (\omega x)$ and $\cos([\omega +2m\pi ]x) ,\sin ([\omega +2m\pi ]x)$ 
are independent on each edge.  Thus $S_m: E(\omega ^2) \to E([\omega +2m\pi ]^2)$ and $S_m^{-1}:E([\omega +2m\pi ]^2)  \to E(\omega ^2)$
both have kernel $0$, establishing invertibility. 
\end{proof}

With the exception of the values $\omega _{k,0} \in \{ 0,\pi ,2\pi \}$, determination of the fundamental frequencies $\omega _{k,0}$ is a straightforward 
problem of linear algebra.  Functions $f:\vset \to \complex$ defined on the vertex set of $\graph $ form a Hilbert space $\H $ with the usual arithmetic and the degree weighted inner product  
\begin{equation} \label{inprod}
\langle f,g \rangle  = \frac{1}{2}\sum_{v} deg (v) f(v) \overline{g(v)}.
\end{equation}
Given a vertex $v \in \graph  $, let $w_1,\dots ,w_{deg (v)}$ be the vertices adjacent to $v$.
The adjacency operator is  
$Af(v) = \sum_{i=1}^{deg(v)} f(w_i)$,
the identity is $I$, and the degree operator is 
$Tf(v) = deg(v) f(v)$.

The discrete Laplace operator is defined as 
\begin{equation} \label{Deltadef}
\Delta  = I - T^{-1}A.
\end{equation}
The adjacency operator is self-adjoint with respect to the usual dot product, so 
\[\langle T^{-1}A f,g\rangle  = \frac{1}{2}\sum_{v} deg (v) (T^{-1}Af)(v) \overline{g(v)}
= \frac{1}{2} \sum_{v}  (Af(v)) \overline{g(v)},\]
implying that $\Delta $ is self-adjoint on $\H $.  Since  
\[ \Delta  = (I - T^{-1}A) = T^{-1/2}( I - T^{-1/2} AT^{-1/2})T^{1/2},\]
$\Delta $ is similar, in the sense of matrix conjugation, to the Laplace operator  $I - T^{-1/2} AT^{-1/2}$
treated in \cite[pp. 3-7]{Chung}, which is self adjoint with respect to the dot product.
In particular the eigenvalues of $\Delta $ are real and nonnegative, with $0$ having an eigenspace
spanned by the constants.  

\begin{prop} \label{combspec}
Suppose $\graph $ is a finite simple equilateral graph with edge lengths $1$.
If $\lambda  \notin \{ n^2 \pi ^2  \} $,
then $\lambda $ is an eigenvalue of $\dop $
if and only if $\nu = 1 - \cos (\sqrt{\lambda }) $
is an eigenvalue of $\Delta $.  
Evaluation at the vertices of $\graph $ gives a linear bijection between the $\lambda $
eigenspace of $\dop $ and the $\nu $ eigenspace of $\Delta $.
\end{prop}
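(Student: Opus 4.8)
The plan is to build an explicit correspondence, via evaluation at the vertices, between the $\lambda$-eigenspace of $\dop$ and the $\nu$-eigenspace of $\Delta$, exploiting the standing hypothesis $\lambda \notin \{n^2\pi^2\}$. Write $\omega = \sqrt{\lambda}$, so that $\omega \neq n\pi$ for every integer $n$ and hence $\sin\omega \neq 0$. First I would record the local form of an eigenfunction. On an edge $e$ identified with $[0,1]$, whose endpoints correspond to vertices $v$ and $w$, any solution of $-y_e'' = \omega^2 y_e$ is determined by its two endpoint values, and since $\sin\omega \neq 0$ it admits the interpolation
\[ y_e(x) = y_e(0)\frac{\sin(\omega(1-x))}{\sin\omega} + y_e(1)\frac{\sin(\omega x)}{\sin\omega}. \]
Continuity of an eigenfunction on $\graph$ forces the endpoint values of incident edges to agree, so evaluation at the vertices yields a well-defined $f:\vset \to \complex$ with $f(v) = y(v)$; conversely, this formula reconstructs a continuous edge function from any prescribed vertex data.

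Second I would translate the Kirchhoff condition \eqref{derive} into a spectral condition for $\Delta$. Differentiating the interpolation formula and evaluating at $x = 0$ (the derivative pointing into $e$ from $v$) gives
\[ y_e'(0) = \frac{\omega}{\sin\omega}\bigl[\, f(w) - f(v)\cos\omega \,\bigr]. \]
Summing over the edges $e\sim v$ and using that $\omega/\sin\omega \neq 0$, the condition $\sum_{e\sim v} y_e'(v) = 0$ becomes $\sum_{e\sim v} f(w) = \deg(v)\cos\omega\, f(v)$, that is $(Af)(v) = \deg(v)\cos\omega\, f(v)$, equivalently $(I - T^{-1}A)f = (1-\cos\omega)f$. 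Hence the Kirchhoff conditions hold at every vertex precisely when $\Delta f = \nu f$ with $\nu = 1 - \cos(\sqrt{\lambda})$.

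Finally I would assemble the bijection. Evaluation is injective on $E(\lambda)$: if $f \equiv 0$, the interpolation formula forces $y_e \equiv 0$ on every edge, so $y \equiv 0$. It is surjective onto the $\nu$-eigenspace: given $\Delta f = \nu f$, the interpolation formula defines a continuous function on $\graph$ with $-y_e'' = \omega^2 y_e$ on each edge and, by reversing the computation above, satisfying \eqref{derive} at each vertex, hence an eigenfunction of $\dop$ whose vertex values recover $f$. Linearity is immediate from the formulas. In particular $E(\lambda) \neq 0$ if and only if the $\nu$-eigenspace of $\Delta$ is nonzero, which yields the stated equivalence of eigenvalues. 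The one delicate point, and the crux of the argument, is the hypothesis $\lambda \notin \{n^2\pi^2\}$: it is exactly what guarantees $\sin\omega \neq 0$, without which the interpolation formula and the entire edge-to-vertex correspondence collapse, since at $\omega = n\pi$ one acquires Dirichlet eigenfunctions supported inside edges that vanish at all vertices and are therefore invisible to evaluation. This is precisely why the frequencies $0,\pi,2\pi$ are handled separately in the algorithm.
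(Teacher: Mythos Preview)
Your proof is correct and follows essentially the same route as the paper's: represent an eigenfunction on each edge by its endpoint values (valid since $\sin\omega\neq 0$), then translate the Kirchhoff condition into the eigenvector equation $T^{-1}Af=\cos\omega\, f$. The only cosmetic difference is that the paper uses the $\cos(\omega x),\sin(\omega x)$ basis for the interpolation while you use the Lagrange-type basis $\sin(\omega(1-x))/\sin\omega,\ \sin(\omega x)/\sin\omega$; you are also more explicit about injectivity and surjectivity where the paper simply says ``running the argument in reverse finishes the proof.''
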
  

\begin{proof}
Suppose $y(x)$ is an eigenfunction of $\dop $ with eigenvalue $\lambda = \omega ^2$.
If the edge $e$ from $v$ to an adjacent vertex $w_i$ is identified with $[0,1]$, then 
\begin{equation} \label{eform}
y(x) = y(v)\cos (\omega x) + \frac{y(w_i) - y(v)\cos(\omega )}{\sin(\omega )} \sin (\omega x), \quad \sin(\omega ) \not= 0. 
\end{equation}
That is, on the edge $e$, $y$ may be recovered from its values at $0,1$ except when $ \sin(\omega ) = 0$.
The derivative condition \eqref{derive} gives
\[ y(v) \cos (\omega ) = \frac{1}{{\rm deg}(v)}\sum_j y(w_j) .\]
The vertex values of $y$ are thus an eigenvector for $T^{-1}A$ with eigenvalue  $\cos (\omega ) $,
so $\nu = 1 - \cos (\sqrt{\lambda } )$ is an eigenvalue of $\Delta $. 
Running the argument in reverse finishes the proof.
\end{proof}

For computations it is also helpful to note that \eqref{eform} constructs eigenfunctions of $\dop $ from
eigenvectors of $\Delta $.  The algorithm proceeds by computing the eigenvalues of the real symmetric matrix $I - T^{-1/2}A_1T^{-1/2}$,
along with the (dot product) orthonormal eigenvectors.   Multiplying the eigenvectors by $T^{-1/2}$ produces 
$\H $ orthonormal eigenvectors of $I - T^{-1}A$.
 
\subsection{Eigenfunction inner products}

Orthogonality of eigenspaces with distinct eigenvalues is a consequence of the self-adjointness of $\Delta $ and $\dop $.
This fact still leaves open the more detailed question about inner products of the constructed eigenfunctions $\Psi _{m,k}$.  
Suppose $\phi _1, \phi _2$ are eigenvectors of $\Delta $ with the same eigenvalue $\nu $.
For each edge $e$ and $j = 1,2$, let $a_{e,j} = \phi _j(0)$, $b_{e,j }= \phi _j(1)$.
Similarly assume that the functions $y_j$, with $y_j(0) = \phi _j (0)$ and $y_j(1) = \phi _j (1)$, are eigenfunctions of $\dop $ 
with eigenvalue $\lambda $ where $\nu = 1 - \cos (\sqrt{\lambda } ) $, $\lambda  \notin \{ n^2 \pi ^2 \ | \ n = 0,1,2,\dots \} $.
On each edge $e$ the eigenfunctions $y_j$ have the form
\[y_{e,j}(x,\lambda ) = A_{e,j}\cos (\omega x) + B_{e,j}\sin (\omega x), \quad \omega = \sqrt{\lambda } \ge 0. \]

\begin{thm} \label{eqdot}
Suppose $\lambda \notin \{ n^2\pi ^2 \}$.  Then
\begin{equation} \label{dot1}
\sum_{v \in \vset}{\rm deg} (v) \phi_1(v)\overline{\phi _2(v)} = \sum_{e \in \eset} (a_{e,1}\overline{a_{e,2}} + b_{e,1}\overline{b_{e,2}}) \end{equation}
\[= \sum_{e \in \eset} (A_{e,1}\overline{A_{e,2}} + B_{e,1}\overline{B_{e,2}}) .\]
\end{thm}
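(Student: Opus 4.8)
The two asserted equalities are of quite different character, and I would handle them separately.

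The first equality, $\sum_{v}{\rm deg}(v)\phi_1(v)\overline{\phi_2(v)} = \sum_e(a_{e,1}\overline{a_{e,2}} + b_{e,1}\overline{b_{e,2}})$, is purely combinatorial. Since $a_{e,j}$ and $b_{e,j}$ are the values of $\phi_j$ at the two endpoints of $e$, expanding the right-hand side produces exactly one summand $\phi_1(v)\overline{\phi_2(v)}$ for each incidence of a vertex $v$ with an edge. As $\graph$ is simple, the number of incidences at $v$ is precisely ${\rm deg}(v)$, so regrouping the edge sum by vertices returns the weighted vertex sum. This step is a double counting of vertex--edge incidences and uses none of the spectral structure.

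The second equality is the substantive one, and the key realization is that it is \emph{not} valid edge by edge. Since $y_{e,j}(0) = A_{e,j}$ and $y_{e,j}(1) = A_{e,j}\cos\omega + B_{e,j}\sin\omega$, boundary matching forces $A_{e,j} = a_{e,j}$ and, via \eqref{eform}, $B_{e,j} = (b_{e,j} - a_{e,j}\cos\omega)/\sin\omega$, which is well defined because $\lambda\notin\{n^2\pi^2\}$ gives $\sin\omega\ne 0$. The $A$-terms thus cancel the $a$-terms, and the claim collapses to $\sum_e b_{e,1}\overline{b_{e,2}} = \sum_e B_{e,1}\overline{B_{e,2}}$. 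Substituting the formula for $B_{e,j}$, clearing the factor $\sin^2\omega$, and applying $\sin^2\omega + \cos^2\omega = 1$ leaves an identity of the shape $\cos^2\omega\,[\cdots] = \cos\omega\,[\cdots]$; removing the common factor $\cos\omega$ reduces everything to the single cross identity
\[ \cos\omega\sum_e\bigl(a_{e,1}\overline{a_{e,2}} + b_{e,1}\overline{b_{e,2}}\bigr) = \sum_e\bigl(a_{e,1}\overline{b_{e,2}} + b_{e,1}\overline{a_{e,2}}\bigr). \]
(The degenerate case $\cos\omega = 0$ is immediate, since then both displayed sides vanish and $B_{e,j} = \pm b_{e,j}$.)

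The heart of the proof is this cross identity, and here the eigenvector property finally enters. Reading each product on the right as a value of $\phi_1$ at one endpoint of an edge times a value of $\overline{\phi_2}$ at the other, the right-hand sum is exactly the sum of $\phi_1(w)\overline{\phi_2(v)}$ over all ordered pairs of adjacent vertices. Grouping by $v$ and applying the defining relation from \propref{combspec}, namely $\sum_{w\sim v}\phi_1(w) = \cos\omega\,{\rm deg}(v)\,\phi_1(v)$ (the statement that the vertex values form a $T^{-1}A$ eigenvector with eigenvalue $\cos\omega$), the right-hand side collapses to $\cos\omega\sum_v{\rm deg}(v)\phi_1(v)\overline{\phi_2(v)}$. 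By the first equality this is precisely $\cos\omega$ times the common value of both vertex and edge sums, which is the left-hand side. The main obstacle, then, is not computation but recognizing that the naive per-edge identity fails: the missing cancellation is supplied only globally, by summing over all edges and invoking the discrete eigenvalue equation $A\phi_1 = \cos\omega\,T\phi_1$. Once the problem is funneled into the cross identity, the remainder is bookkeeping, and this establishes the theorem.
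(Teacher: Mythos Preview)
Your argument is correct, and it reaches the same intermediate ``cross identity'' as the paper's equation \eqref{inprod3}, but by a genuinely different and more elementary route. The paper introduces an auxiliary eigenfunction $y_3$ with eigenvalue $(2\pi-\omega)^2$, invokes the self-adjointness of $\dop$ to get $\int_{\graph}y_1\overline{y_3}=0$, and then evaluates that integral explicitly via four trigonometric integrals to arrive at \eqref{inprod3}; only after that does the algebra to finish. You bypass the auxiliary eigenfunction and the integration entirely: your proof of the cross identity is pure combinatorics plus the discrete relation $A\phi_1 = \cos\omega\,T\phi_1$, recognizing the cross sum as $\sum_v \overline{\phi_2(v)}(A\phi_1)(v)$. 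This is shorter, avoids any analysis on $L^2(\graph)$, and makes transparent that the identity is really a statement about $\Delta$-eigenvectors rather than $\dop$-eigenfunctions. The paper's detour does buy something, though: it derives \eqref{inprod3} as a byproduct of an orthogonality computation that is reused verbatim in the proof of \thmref{inprod2}, so within the paper the extra work is amortized. Standing alone, your argument is the cleaner one.
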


\begin{proof}
The first equality is simply the observation that a vertex $v$ is an endpoint of ${\rm deg }(v) $ edges.

As noted in \propref{combspec}, if $\lambda \notin \{ n^2\pi ^2 \}$ then 
\[A_{e,j} = a_{e,j}, \quad B_{e,j} = \frac{b_{e,j} - a_{e,j}\cos(\omega )}{\sin(\omega )} .\]
There is another eigenfunction $y_3$ of $\dop $ with eigenvalue $[2\pi - \omega ] ^2$ having the form
\[y_3(x,[2\pi - \omega ]^2) = a_{e,2}\cos ([2\pi - \omega ]x) + \frac{b_{e,2} - a_{e,2}\cos(2\pi - \omega  )}{\sin(2\pi - \omega )} \sin ([2\pi -\omega ] x) \]
\[= a_{e,2}\cos ([2\pi - \omega ]x) - \frac{b_{e,2} - a_{e,2}\cos(\omega  )}{\sin(\omega )} \sin ([2\pi -\omega ] x). \]

As eigenfunctions of $\dop $ with distinct eigenvalues, the functions $y_1,y_3$ are orthogonal.  Thus
\begin{equation} \label{inprodid}
0 = \int_{\graph } y_1\overline{y_3} =  \sum _{e \in \eset} \int_0^1 \Bigl [ a_{e,1}\cos (\omega x) + \frac{b_{e,1} - a_{e,1}\cos(\omega )}{\sin(\omega )} \sin (\omega x) \Bigr ] 
\end{equation}
\[ \times  \Bigl [ \overline{a_{e,2}\cos ([2\pi - \omega ]x) - \frac{b_{e,2} - a_{e,2}\cos(\omega  )}{\sin(\omega )} \sin ([2\pi -\omega ] x)} \Bigr ] \ dx .\]
Using elementary integrals and trigonometric identities, \eqref{inprodid} becomes
\[0 = \sum_{e \in \eset} a_{e,1}\overline{a_{e,2}}[ \sin (\omega )\cos (\omega ) - 2 \sin (\omega )\cos (\omega ) - \cos ^2(\omega )\frac{\cos (\omega )}{\sin (\omega )} ]\]
\[ + (a_{e,1}\overline{b_{e,2}}  + \overline{a_{e,2}}b_{e,1})[\sin (\omega ) + \frac{\cos ^2(\omega )}{\sin(\omega )}]  - b_{e,1}\overline{b_{e,2}}\frac{\cos (\omega )}{\sin(\omega )}  ,\]
and further simplification leads to
\begin{equation} \label{inprod3}
0 =  \sum_{e \in \eset} - a_{e,1}\overline{a_{e,2}}\cos (\omega) + (a_{e,1}\overline{b_{e,2}} + \overline{a_{e,2}}b_{e,1})  - b_{e,1}\overline{b_{e,2}}\cos(\omega ). 
\end{equation}

Rewrite \eqref{inprod3} first as
\[ 0 = \sum _{e \in \eset} (a_{e,1}\overline{a_{e,2}} + b_{e,1}\overline{b_{e,2}})\cos ^2(\omega ) -   (a_{e,1}\overline{b_{e,2}} + \overline{a_{e,2}}b_{e,1} )(\cos(\omega ))\]
Then using $\cos ^2(\omega ) = 1 - \sin ^2(\omega )$,
\[ \sin^2(\omega ) \sum _{e \in \eset} (a_{e,1}\overline{a_{e,2}} + b_{e,1}\overline{b_{e,2}}) \]
\[ = \sin ^2(\omega )\sum _{e \in \eset} a_{e,1}\overline{a_{e,2}} + \sum _{e \in \eset} a_{e,1}\overline{a_{e,2}} \cos ^2(\omega ) +  b_{e,1}\overline{b_{e,2}} - (a_{e,1}\overline{b_{e,2}} + \overline{a_{e,2}}b_{e,1} )(\cos(\omega ))\]
\[=  \sin ^2(\omega )\sum _{e \in \eset} a_{e,1}\overline{a_{e,2}} + \sum _{e \in \eset}\bigl [b_{e,1} - a_{e,1}\cos (\omega )\bigr] \bigl [\overline{b_{e,2} - a_{e,2}\cos (\omega )}\bigr] .\]
This establishes \eqref{dot1}.
\end{proof}

The next inner product result avoids the restriction $\lambda \notin \{ n^2\pi \}$.

\begin{thm} \label{inprod2}
For $j  = 1,2$, assume the functions $y_j$ are eigenfunctions of $\dop $ with the common eigenvalue $\lambda > 0$.
If  $y_j(x,\lambda ) = A_{e,j}\cos (\omega x) + B_{e,j}\sin (\omega x)$ on the edges $e$,  then
\begin{equation} \label{case2}
\int_{\graph } y_1\overline{y_2} = \sum_{e \in \eset} \frac{1}{2} [ A_{e,1}\overline{A_{e,2}} + B_{e,1}\overline{B_{e,2}}]
\end{equation}

\end{thm}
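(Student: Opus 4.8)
The plan is to compute the integral directly, one edge at a time, and then account for the cross terms globally. On a single edge the eigenfunctions have the explicit form $y_{e,j}(x) = A_{e,j}\cos(\omega x) + B_{e,j}\sin(\omega x)$, so expanding the product $y_{e,1}\overline{y_{e,2}}$ reduces the edge integral to the three elementary integrals of $\cos^2(\omega x)$, $\sin^2(\omega x)$, and $\cos(\omega x)\sin(\omega x)$ over $[0,1]$. First I would record these as $\tfrac12 \pm \tfrac{\sin(2\omega)}{4\omega}$ and $\tfrac{1-\cos(2\omega)}{4\omega}$ respectively.

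Substituting these values, the contribution of edge $e$ is
\begin{equation*}
\int_0^1 y_{e,1}\overline{y_{e,2}} = \tfrac12\bigl(A_{e,1}\overline{A_{e,2}} + B_{e,1}\overline{B_{e,2}}\bigr) + R_e,
\end{equation*}
where the remainder $R_e = \tfrac{\sin(2\omega)}{4\omega}\bigl(A_{e,1}\overline{A_{e,2}} - B_{e,1}\overline{B_{e,2}}\bigr) + \tfrac{1-\cos(2\omega)}{4\omega}\bigl(A_{e,1}\overline{B_{e,2}} + B_{e,1}\overline{A_{e,2}}\bigr)$ collects exactly the terms carrying the factors $\sin(2\omega)$ and $1 - \cos(2\omega)$. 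The desired identity \eqref{case2} is then equivalent to the vanishing of $\sum_{e}R_e$, so everything comes down to this one sum.

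I would split into two cases according to whether $\omega$ is an integer multiple of $\pi$. If $\omega = n\pi$ then $\sin(2\omega) = 0$ and $1 - \cos(2\omega) = 0$, so each $R_e$ vanishes termwise and \eqref{case2} follows with no appeal to the vertex conditions. This is precisely the case excluded from \thmref{eqdot}, and handling it for free is the gain of the present statement.

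For the remaining case $\sin(\omega) \neq 0$, I would invoke \propref{combspec} to identify the vertex values $a_{e,j} = y_j(0)$, $b_{e,j} = y_j(1)$ with eigenvectors of $\Delta$, so that $A_{e,j} = a_{e,j}$ and $B_{e,j} = (b_{e,j} - a_{e,j}\cos\omega)/\sin\omega$. Substituting these into $\sum_e R_e$ and clearing the factor $1/\sin\omega$ (legitimate since $\sin\omega\neq 0$), I expect the expression to collapse, after using $\sin(2\omega) = 2\sin\omega\cos\omega$, $1-\cos(2\omega)=2\sin^2\omega$, and $\sin^2\omega + \cos^2\omega = 1$, to a multiple of
\begin{equation*}
\sum_{e\in\eset}\Bigl[-a_{e,1}\overline{a_{e,2}}\cos\omega + \bigl(a_{e,1}\overline{b_{e,2}} + \overline{a_{e,2}}b_{e,1}\bigr) - b_{e,1}\overline{b_{e,2}}\cos\omega\Bigr],
\end{equation*}
which is exactly the quantity shown to vanish in \eqref{inprod3} during the proof of \thmref{eqdot}. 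Hence $\sum_e R_e = 0$ and \eqref{case2} holds. The main obstacle is this final algebraic reduction: the bookkeeping of the $a,b$ substitution is routine but error-prone, and the key check is that the coefficients of $a_{e,1}\overline{a_{e,2}}$ and $b_{e,1}\overline{b_{e,2}}$ both simplify to $-\cos\omega$ while those of $a_{e,1}\overline{b_{e,2}}$ and $b_{e,1}\overline{a_{e,2}}$ both simplify to $1$, reproducing \eqref{inprod3} exactly.
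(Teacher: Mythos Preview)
Your proposal is correct and follows essentially the same route as the paper: expand the edge integral into the desired main term plus a remainder, dispose of the case $\omega\in\pi\Z$ trivially since $\sin(2\omega)=1-\cos(2\omega)=0$, and in the remaining case substitute $A_{e,j}=a_{e,j}$, $B_{e,j}=(b_{e,j}-a_{e,j}\cos\omega)/\sin\omega$ and reduce using \eqref{inprod3}. Your final reduction is in fact slightly cleaner than the paper's: collecting coefficients directly shows the bracketed remainder equals $\tfrac{1}{\sin^2\omega}\bigl[-a_{e,1}\overline{a_{e,2}}\cos\omega + (a_{e,1}\overline{b_{e,2}}+b_{e,1}\overline{a_{e,2}}) - b_{e,1}\overline{b_{e,2}}\cos\omega\bigr]$, so a single appeal to \eqref{inprod3} finishes, whereas the paper substitutes \eqref{inprod3} into an intermediate expression and simplifies further.
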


\begin{proof}

Simple calculations lead to 
\[\int_{\graph} y_1\overline{y_2}  = \sum_{e \in \eset} \frac{1}{2} [ A_{e,1}\overline{A_{e,2}} + B_{e,1}\overline{B_{e,2}}]  \]
\[+ \frac{\sin (\omega )}{2\omega} [ \cos (\omega ) A_{e,1}\overline{A_{e,2}} + \sin (\omega ) (A_{e,1}\overline{B_{e,2}} + \overline{A_{e,2}}B_{e,1}) - \cos (\omega ) B_{e,1}\overline{B_{e,2}} ] \]

Here the argument splits into two cases.  If $\lambda \in \{ n^2 \pi ^2 \}$, then $\sin (\omega ) = 0$ and \eqref{case2} is established.
For the cases $\lambda \notin \{ n^2 \pi ^2 \}$, proceed as follows.  Inserting the endpoint values $a_{e,j},b_{e,j}$ gives
\[ \cos (\omega ) A_{e,1}\overline{A_{e,2}} + \sin (\omega ) (A_{e,1}\overline{B_{e,2}} + \overline{A_{e,2}}B_{e,1}) - \cos (\omega ) B_{e,1}\overline{B_{e,2}} \]
\[ = - \cos (\omega )  a_{e,1}\overline{a_{e,2}} + a_{e,1}\overline{b_{e,2}}   + \overline{a_{e,2}}b_{e,1} \]
 \[ - \frac{ \cos (\omega )}{\sin ^2(\omega )}[  b_{e,1}\overline{b_{e,2}}  - a_{e,1}\overline{b_{e,2}}\cos (\omega ) - \overline{a_{e,2}}b_{e,1}\cos (\omega ) + a_{e,1}\overline{a_{e,2}}\cos ^2(\omega )]\]
 Using \eqref{inprod3}
 \[  \cos (\omega ) A_{e,1}\overline{A_{e,2}} + \sin (\omega ) (A_{e,1}\overline{B_{e,2}} + \overline{A_{e,2}}B_{e,1}) - \cos (\omega ) B_{e,1}\overline{B_{e,2}} \]
 \[ =  \cos (\omega )  b_{e,1}\overline{b_{e,2}}  - \frac{ \cos (\omega )}{\sin ^2(\omega )}[  b_{e,1}\overline{b_{e,2}}  - a_{e,1}\overline{a_{e,2}}\cos ^2(\omega ) - \overline{b_{e,2}}b_{e,1}\cos ^2(\omega ) + a_{e,1}\overline{a_{e,2}}cos ^2(\omega )]  \]
 \[ = 0.\]
 This establishes \eqref{case2}.
\end{proof}

\begin{cor} \label{freqinc}
Suppose $z_1$ and $z_2$ are eigenfunctions of $\dop $ with common eigenvalue $[\omega +2m\pi ]^2$, obtained from $y_1$ and $y_2$ with common eigenvalue $\omega ^2$ by increasing the frequency.
If $y_1$ and $y_2$ are orthonormal, so are $z_1$ and $z_2$. 
\end{cor}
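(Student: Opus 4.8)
The plan is to observe that \thmref{inprod2} expresses the $L^2(\graph)$ inner product of two eigenfunctions sharing a common positive eigenvalue purely through their edge coefficients $A_{e,j}, B_{e,j}$, with no residual dependence on the frequency $\omega$. Since the frequency shift $S_m$ leaves exactly those coefficients unchanged, the inner product cannot change, and orthonormality must be inherited.

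First I would make precise how $S_m$ acts at the level of coefficients. Writing $y_j$ on an edge $e$ as $A_{e,j}\cos(\omega x) + B_{e,j}\sin(\omega x)$, equivalently as $\gamma_{e,j}\exp(i\omega x) + \delta_{e,j}\exp(-i\omega x)$, the two descriptions are linked by $A_{e,j} = \gamma_{e,j} + \delta_{e,j}$ and $B_{e,j} = i(\gamma_{e,j} - \delta_{e,j})$, a relation that does not involve $\omega$. By the construction preceding \propref{rooteval}, the shifted eigenfunction $z_j = S_m y_j$ is obtained by replacing $\omega$ with $\omega + 2m\pi$ while reusing the same coefficients; thus on each edge $z_j(x) = A_{e,j}\cos([\omega + 2m\pi]x) + B_{e,j}\sin([\omega + 2m\pi]x)$ with the identical $A_{e,j}, B_{e,j}$.

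Next I would apply \thmref{inprod2} to the shifted pair. Because $\omega + 2m\pi > 0$, the eigenvalue $[\omega + 2m\pi]^2$ is positive and the theorem applies without further restriction, since its statement already subsumes both the case $\lambda \in \{n^2\pi^2\}$ and the case $\lambda \notin \{n^2\pi^2\}$. Applied to $z_1, z_2$ it gives
\[ \int_{\graph} z_1\overline{z_2} = \sum_{e \in \eset}\frac{1}{2}\bigl[ A_{e,1}\overline{A_{e,2}} + B_{e,1}\overline{B_{e,2}} \bigr], \]
and by the coefficient identification above the right-hand side is verbatim the expression that \eqref{case2} produces for $\int_{\graph} y_1\overline{y_2}$. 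Hence $\int_{\graph} z_1\overline{z_2} = \int_{\graph} y_1\overline{y_2}$. Taking the three pairings $(y_1,y_1)$, $(y_2,y_2)$, $(y_1,y_2)$ in turn, orthonormality of $y_1, y_2$ transfers directly to $z_1, z_2$.

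The argument is short, so there is little genuine difficulty; the one point requiring care is the claim that $S_m$ preserves the coefficient data. This rests on the frequency-independence of the passage between the $\{\gamma,\delta\}$ and $\{A,B\}$ representations noted above, together with the defining property of $S_m$ that it reuses the same coefficients at the shifted frequency. Once that is granted, the corollary is immediate from the manifestly $\omega$-free right-hand side of \eqref{case2}.
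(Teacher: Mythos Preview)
Your proposal is correct and follows exactly the reasoning the paper intends: the corollary is stated without proof because it is immediate from \thmref{inprod2}, whose right-hand side \eqref{case2} depends only on the edge coefficients $A_{e,j},B_{e,j}$ and not on the frequency, while the shift $S_m$ by definition reuses those same coefficients. You have simply written out the details the paper leaves implicit.
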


\subsection{Sampling}   

Since the QGFFT uses values of functions at sample points, a couple of additional issues arise. 
The insertion of sample points to the edges of $\graph $ may be viewed as an equivalent quantum graph with
each edge replaced by a path of $N$ edges of length $1/N$.  The Hilbert space of complex functions defined on the sample points
with an inner product  $\langle \cdot , \cdot \rangle _N$ similar to \eqref{inprod} is denoted $\H _N$.  The associated discrete Laplacian is $\Delta _N$. 

The truncated Fourier series used for the QGFFT should be capable of representing functions in $\H _N$.
Let $E_p(N^2\pi ^2)$ denote the subspace of the eigenspace $E(N^2\pi ^2)$ spanned by eigenfunctions 
of $\dop $ having the form $\pm \cos(N\pi x)$ on each edge. This subspace has dimension at most $1$, and its nonzero elements do not vanish at the vertices.
Define the truncated Fourier space, 
\begin{equation} \label{DefSN}
S_N = span \Bigl \{ \{ E(\lambda ), 0 \le \lambda < N^2\pi ^2 \} \cup \{ E_p(N^2\pi ^2)\} \Bigr \}.
\end{equation}
For continuous functions $f:\graph _1 \to \complex$, define the restriction map $R_N$ taking $f$ to its values on edge samples.
  
The following result is proven using a rescaled version of \propref{combspec}; the proof can be found in \cite[Prop 3.2]{CarlsonFFT}.
The condition $0 < \omega _{m,k} \le N\pi $ here amounts to $m = 0,\dots , N/2 -1$ in the Fourier series of \eqref{betadef}.

\begin{prop} \label{biject}
The restriction map $R_N:S_N \to \H _N$ is a bijection. For $0 \le \lambda < N^2 \pi ^2$
this map takes distinct  eigenspaces $E(\lambda )$ 
onto distinct eigenspaces $E_N(N^2(1 - \cos (\sqrt{\lambda }/N))$ of $\Delta _N$, and $R_N$
takes $E_p (N^2\pi ^2) $ onto $E_N(2N^2)$.
\end{prop}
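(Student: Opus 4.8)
The plan is to view $S_N$ as a space of $\dop$-eigenfunctions on the subdivided graph $\graph _N$ (edges of length $1/N$) and to apply \propref{combspec}, rescaled to length $1/N$, to match each eigenspace with an eigenspace of $\Delta _N$. The first step is to note that inserting degree-two sample vertices changes nothing: at such a vertex the condition \eqref{derive} says only that $f'$ is continuous through the vertex, which every smooth eigenfunction already satisfies. Thus a $\dop$-eigenfunction on $\graph _1$ and the corresponding eigenfunction on $\graph _N$ are literally the same continuous function with the same eigenvalue, and $S_N$ is the orthogonal direct sum of the $\dop$-eigenspaces $E(0)$, the spaces $E(\lambda )$ with $0<\lambda <N^2\pi ^2$, and $E_p(N^2\pi ^2)$.

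Next I would run the rescaled \propref{combspec} on $\graph _N$. On an edge of length $1/N$ the representation \eqref{eform} recovers an eigenfunction from its two endpoint sample values whenever $\sin (\omega /N)\neq 0$, that is whenever $\lambda =\omega ^2\notin \{ n^2N^2\pi ^2\}$; this holds throughout $0<\lambda <N^2\pi ^2$, so the original exceptional frequencies $\omega \in \{ n\pi \}$ cause no trouble after rescaling. The derivative condition then makes the sample values an eigenvector of the normalized discrete Laplacian with eigenvalue $1-\cos (\omega /N)=1-\cos (\sqrt \lambda /N)$, i.e.\ an eigenvector of $\Delta _N$ with eigenvalue $N^2(1-\cos (\sqrt \lambda /N))$, so $R_N$ carries $E(\lambda )$ bijectively onto $E_N(N^2(1-\cos (\sqrt \lambda /N)))$. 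Because $\cos$ is strictly decreasing on $[0,\pi )$, the assignment $\lambda \mapsto 1-\cos (\sqrt \lambda /N)$ is injective, so distinct $E(\lambda )$ map onto distinct, hence orthogonal, eigenspaces of $\Delta _N$; the value $\lambda =0$ is the constant case, sent to the $\nu =0$ eigenspace.

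The main obstacle is the top frequency $\lambda =N^2\pi ^2$, where $\sin (\omega /N)=\sin \pi =0$ and \eqref{eform} degenerates. Here the $\sin (N\pi x)$-type eigenfunctions vanish at every sample $n/N$ (since $\sin (n\pi )=0$) and so lie in the kernel of $R_N$; this is precisely why $S_N$ uses only the $\cos$-type subspace $E_p(N^2\pi ^2)$ and not all of $E(N^2\pi ^2)$. Since $N=2^J$ is even and $\graph $ is connected, continuity forces $E_p(N^2\pi ^2)$ to be the one-dimensional span of the function equal to $\cos (N\pi x)$ on every edge (its vertex values are all $1$ and its vertex derivatives vanish, so \eqref{derive} holds), and its samples are $(-1)^n$. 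On the discrete side, every cycle of $\graph _N$ has even length, so $\graph _N$ is bipartite and $2N^2$ is the simple top eigenvalue of $\Delta _N$, with eigenvector the bipartition sign. Matching $(-1)^n$ to this sign shows $R_N$ maps $E_p(N^2\pi ^2)$ isomorphically onto $E_N(2N^2)$.

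Finally I would assemble the global bijection. The self-adjoint operator $\Delta _N$ decomposes $\H _N$ into the orthogonal sum of its eigenspaces $E_N(N^2\nu )$, $\nu \in [0,2]$, and the steps above exhibit each of these as the bijective $R_N$-image of exactly one summand of $S_N$. Hence $R_N(S_N)=\H _N$, the summands of $S_N$ are carried injectively to mutually orthogonal subspaces, and $\dim S_N=\dim \H _N$; therefore $R_N:S_N\to \H _N$ is a bijection acting on eigenspaces as claimed.
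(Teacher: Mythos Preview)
Your argument is correct and is exactly the route the paper indicates: the paper does not prove \propref{biject} here but defers to \cite[Prop.~3.2]{CarlsonFFT}, noting only that it ``is proven using a rescaled version of \propref{combspec}.'' You have supplied precisely that argument, together with the direct handling of the exceptional top frequency $\lambda = N^2\pi^2$ via bipartiteness of $\graph_N$ for even $N$; your surjectivity step, matching every $\Delta_N$-eigenspace to some summand of $S_N$ and then comparing dimensions, is the natural way to finish.
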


Normally the trapezoidal rule, which is used for $\langle \cdot , \cdot \rangle _N$, gives accurate, but not exact values for integrals.
However, except for a slight modification for $E_p (N^2\pi ^2) $, and mainly by using \thmref{eqdot}, 
one can show that $\langle \Psi _{m,j}, \Psi _{m,k} \rangle _N = \int_{\graph} \Psi _{m,j}\overline{\Psi _{m,k}} $.

This desired orthonormality was tested computationally  in several cases:
the complete graph on $4$ vertices, a figure $8$, and the edges of a $3d$ cube.
Diagonal and off-diagonal inner products 
\[ \langle \Psi _{j,m} ,\Psi _{k,m}\rangle = \sum_{e \in \eset} \int_0^1 \Psi _{j,m}(x) \overline{\Psi _{k,m}(x)} \ dx \]
were computed using the trapezoidal rule on each edge, with $N = 64$. 
In all cases the diagonal inner products satisfied $ |\langle \Psi _{k,m} ,\Psi _{k,m}\rangle - 1 | < 10^{-14}$.
The accuracy for the off-diagonal terms also satisfied  $ |\langle \Psi _{j,m} ,\Psi _{k,m}\rangle | < 10^{-14}$.

\section{Spectral and pseudospectral methods}

Below are several applications of both spectral and a pseudospectral methods for solving the Schr\"odinger equation 
\begin{equation} \label{Schrod1}
\frac{\partial \psi (t,x) }{\partial t} = i \dop \psi (t,x), \quad \psi (0,x) = \psi_0(x) 
\end{equation}
on networks.  Networks used for sample computations are shown in Figure \ref{fig:Loop}.  

The spectral method is most directly applicable when $ \dop = - \partial ^2/\partial x^2$ and the edges of $\graph $ have length $1$.
When the edge lengths are integers, the original edges can be replaced by paths, leaving edges with length $1$.
With respect to the natural identification, this vertex insertion leaves the operator  $- \partial ^2/\partial x^2$ unchanged. 
The QGFFT provides a computed version of a truncated eigenfunction expansion.  Explicit formulas are thus available to compute  
$\psi (t,x)$ at arbitrary times $t$ with extraordinary accuracy and efficiency, without the stability concerns plaguing other methods.
The examples below include a case with a nonconstant potential, $ \dop = - \frac{\partial ^2}{\partial x^2} + p(x)$. 
For such problems the Strang splitting method \cite{Strang} can be used to alternately advance spectral solutions of 
$\partial \psi /\partial t = -i \frac{\partial ^2\psi }{\partial x^2}$, and then (explicit) solutions of $\partial \psi /\partial t = i p(x)$.

\begin{figure}[h!tbp] \centering
\hfill
 \begin{subfigure}[b]{0.4\textwidth}
 \includegraphics[width= 2in, height = 1.65 in]{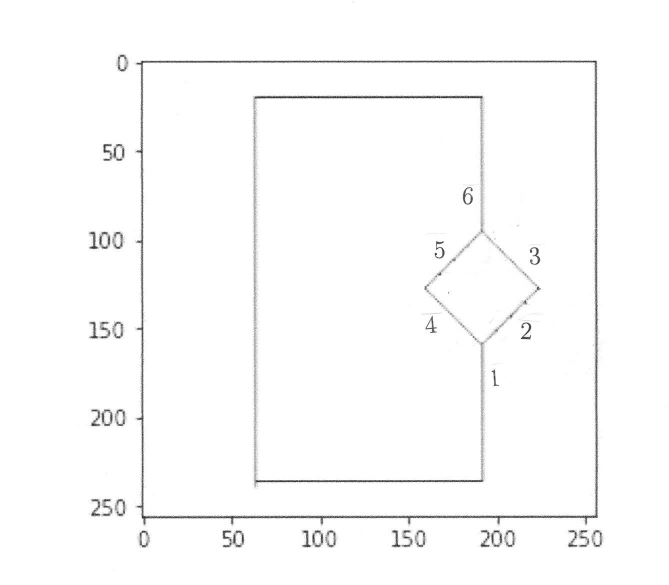}
 \caption{Loop with box obstacle}
 \end{subfigure}
 \centering
\hfill
 \begin{subfigure}[b]{0.4\textwidth}
 \includegraphics[width= 2in, height = 1.65 in]{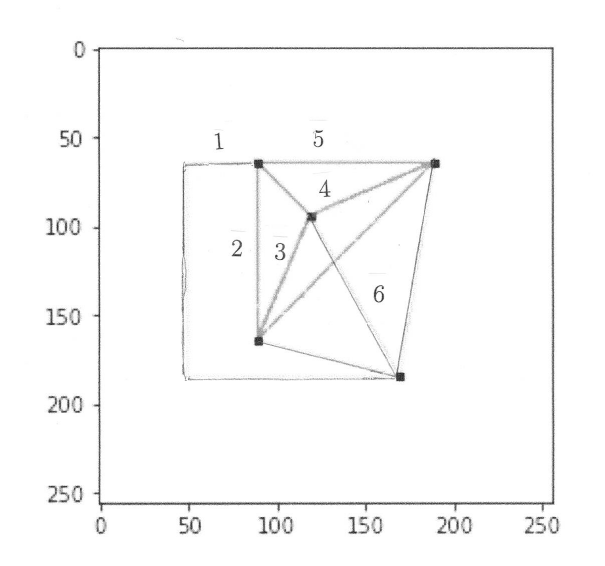}
 \caption{$K_4$ inside $K_5$}
 \end{subfigure}
 \caption{Sample networks}
 \label{fig:Loop}
\end{figure}

\subsection{Pseudospectral method}

If the edges of $\graph $ have lengths which are not integer multiples of a common value, a different vertex insertion strategy is used 
in preparation for the pseudospectral method.  This step is described in the following lemma.  

\begin{lem} \label{pscuts}
It is possible to insert a finite collection of vertices and rescale the edges of $\graph $ so the longest edge has length $1$ and the shortest edge length is at least $2/3$.
\end{lem}

\begin{proof}
If $l_{min}$ is the length of the shortest edge, the length of any edge $e$ can be expressed as $l_e = m_el_{min} + n_e$, with $m_e$ an integer and $0 \le n_e  < l_{min}$.
Insert vertices to replace $e$ by a path with $m$ edges of length $\mu _e = l_{min} + n_e/m_e$.
If $n_e/m_e \le l_{min}/2$ for all edges $e$, the vertex insertion is finished.  

In case there are edges with $l_{min} < l_e < 2l_{min}$ ($m_e = 1$), cut the edges with lengths $l_{min}$ and $\mu _e$ in half.  
Replace edges with lengths $l_{min} < l_e < 3l_{min}/2$ by two edges with lengths $l_e/2$.
Replace edges with lengths $3l_{min}/2 \le l_e < l_{min}$ by three edges with lengths $l_e/3$, noting that
$l_{min}/2 \le l_e/3 \le l_{min}/3 < 3l_{min}/4$.

Now the longest edge has length at most $3/2$ times the length of the shortest edge.
Rescaling the edges so the largest edge has length $1$ completes the proof.
\end{proof}

After taking advantage of \lemref{pscuts} the edges $e$ of $\graph $ will have lengths $l_e$ with $2/3 \le l_e \le 1$.
For each edge $e$ with local coordinate $x$, introduce new variables $\xi = \phi_e (x)$ with
\begin{equation} \label{COV}
\phi _e(x) = \frac{x}{l_e} - \frac{1-l_e}{2\pi }\sin (2\pi x/l_e) .
\end{equation}
Since $\phi_e '(x) > 0$, the function $\phi _e:[0,l_e] \to [0,1]$ is strictly increasing, with derivative $1$ at both ends of the interval, so functions 
which satisfy the Kirchhoff boundary conditions \eqref{derive} are preserved after the change of variables. 

Letting $\sigma_e (\xi) = \phi_e '(\phi_e ^{-1}(\xi))$,  the chain rule gives
\begin{equation} \label{newLap} 
\frac{d}{dx} = \phi_e '(x) \frac{d}{d\xi} = \sigma (\xi)\frac{d}{d\xi}, \quad  
\frac{d^2}{dx^2} = \sigma_e (\xi)  \frac{d}{d\xi} \sigma_e (\xi) \frac{d}{d\xi}.
\end{equation}
A conjugation leads to 
\[\sigma_e ^{1/2} \sigma_e  \frac{d}{d\xi} \sigma_e  \frac{d}{d\xi} \sigma_e ^{-1/2}
=  \sigma_e ^{2} \frac{d^2}{d\xi ^2} +  \frac{1}{4} (\sigma_e ')^2 - \frac{1}{2} \sigma_e \sigma_e '' \]
The simple form of \eqref{COV} leads to 
\[\sigma_e '(\xi ) = \phi_e ''(\phi_e ^{-1}(\xi ))\frac{1}{\sigma_e (\xi )}, \quad \phi_e ''(x) =  \frac{2\pi (1-l_e)}{l_e^2}\sin (\frac{2\pi x}{l_e}),\]
and 
\[ \sigma_e ''(\xi )  =  \phi_e ^{(3)}(\phi_e ^{-1}(\xi ))\frac{1}{\sigma_e ^2} -  \phi_e ''(\phi_e ^{-1}(\xi ))\frac{\sigma_e '}{\sigma_e ^2} ,\quad
\phi_e ^{(3)}(x) =  \frac{(2\pi )^2(1-l_e)}{l_e^3}\cos (\frac{2\pi x}{l_e}).\]

The goal now is to compute numerical approximations of solutions to the time dependent Schr\"odinger equation
\[ \frac{\partial \psi (t,x) }{\partial t} = i \dop \psi (t,x), \quad \psi (0,x) = \psi_0(x) .\]
With $\Psi (t,\xi ) = \sigma ^{1/2}(\xi )\psi (\phi ^{-1}(\xi ))$, the equivalent version 
\begin{equation} \label{realform}
 -i \frac{\partial \Psi}{\partial t}=  - \sigma_e ^{2} \frac{d^2 \Psi }{d\xi ^2} -  \frac{1}{4} (\sigma_e ')^2\Psi  + \frac{1}{2} \sigma_e \sigma_e ''\Psi , \quad 
\Psi (0,\xi ) = \sigma ^{1/2}(\xi )\psi _0(\phi ^{-1}(\xi )) 
\end{equation}
is treated on the graph $\graph _{\xi}$ with all edges of length $1$.  Edges have samples $\xi _0,\xi _1 ,\dots ,\xi _N$ with equal spacing $\Delta \xi = 1/N$.
The values $\phi ^{-1}(\xi _j)$ are computed by interpolation.
Unless otherwise indicated, the value $N = 32$ was used.   

A standard integration scheme is used to compute approximations $U^{n}_j \sim \Psi (t_n,\xi _j)$ at times $t_n$ with equal spacing $h$;
$h$ is taken small enough to avoid numerical instability.
The scheme, which takes advantage of the form of the Schr\"odinger equation (but is not suitable for the heat equation \cite[p. 35]{Morton}) is 
\begin{equation} \label{Askar} 
U^{n+1}_j = U^{n-1}_j + \mu (i\dop U)_j, \quad \mu = \frac{2h}{(\Delta \xi )^2}.
\end{equation}
This method was proposed in \cite{Harmuth}, then later in \cite{Askar}, and was also used with a pseudospectral method in \cite{Kosloff}.  
Extensions were recently considered in \cite{vanDijk}. 
As noted in \cite{Rubin}, the extravagant stability claim in \cite{Askar} is erroneous. 

The pseudospectral method uses the QGFFT and its inverse to calculate $d^2 \Psi /d\xi ^2 $ at the spatial samples of the graph.
The updates are then computed in the spatial domain.  For comparison, some calculations are also carried out with a difference operator, which 
approximates $d^2 \Psi /d\xi ^2 $ at a sample point $v$,
\begin{equation} \label{diff}
\frac{d^2 \Psi }{d\xi ^2} \sim \frac{2}{\deg(v)(\Delta \xi ) ^2}[ - \deg(v) f_e(v) + \sum_{w} f_e(w ) ],
\end{equation}
the sum taken over sample points $w$ adjacent to $v$.  Notice that this is just the usual second difference operator 
at samples interior to the edges.

After the computation of $\Psi (t_n,\xi _j)$ is completed, the solution is mapped back to the original space variable.
The number of spatial samples for the edge $e$ is $int(N*l_e)$ where $l_e$ is the edge length after vertex insertions.
Interpolation is used to determine $\psi (t_n,x_j)$. Recall that length rescaling is modest since $2/3 \le l_e \le 1$.    

\subsection{Computations}

\subsubsection{Accuracy}

The time dependent Schr\"odinger equation \eqref{Schrod1} with $\dop = -\partial ^2/\partial x^2$ 
is first considered on the network of \ref{fig:Loop}, subfigure (a), a loop with box obstacle.  
To provide a comparison of the spectral method with the pseudospectral and difference methods
the network has all $11$ edges with length $1$.
In each case the initial function $\psi _0(x)$ has the value $\pm \sin(m\pi x)$ on the edges of the box, while $\psi _0(x)$ is zero on the other edges.
The $\pm $ signs are chosen to satisfy the vertex conditions.  The algorithms were tested with values $m = 1,3,5,7$.
With these values $\psi _0(x)$ is an eigenfunction of $\dop$ with eigenvalues $\lambda _m= m^2\pi^2$.  The exact solutions are then
$\psi (t,x) = \exp(i \lambda _m t )\psi _0(x)$ on the edges of the box, and zero elsewhere. 

Three methods for solving \eqref{Schrod1} are compared in Figure \ref{fig:Accur}, subfigures $(a)-(c)$.  
In each case solutions $\phi (t_k,x_n)$ are computed from $t = 0$ to $t = 1/(16\pi)$ using an increment $h$ resulting in $200$ time samples.  
Figure \ref{fig:Accur} shows values of a measure of relative error 
\begin{equation} \label{relerr}
\log_{10}\Bigl ( \Bigl[ \frac{ \int_{\graph} |\psi (t_k,x) - \phi (t_k,x)|^2 \ dx}{\int_{\graph} |\psi(t,x)|^2 \ dx} \Bigr ]^{1/2} \Bigr )
\end{equation}
on a logarithmic scale as a function of time.  The integrals are calculated using the trapezoidal rule on each edge.

\begin{figure}[h!tbp] 
\centering
 \begin{subfigure}[b]{0.4\textwidth}
 \includegraphics[width= 1.75in, height = 1.2 in]{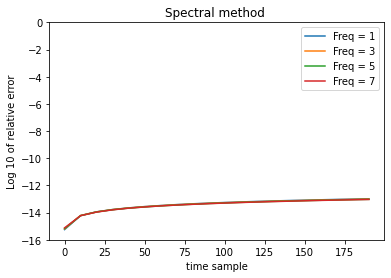}
 \caption{Spectral method accuracy }
\end{subfigure}
\centering
\hfill
\begin{subfigure}[b]{0.4\textwidth}
\includegraphics[width= 1.75in, height = 1.2 in]{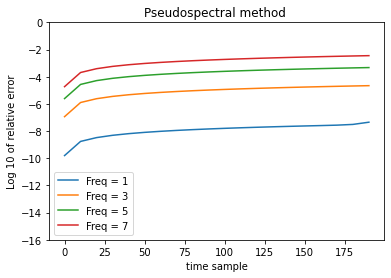}
\caption{Pseudospectral accuracy}
\end{subfigure}
\centering
\hfill
 \begin{subfigure}[b]{0.4\textwidth}
 \includegraphics[width= 1.75in, height = 1.2 in]{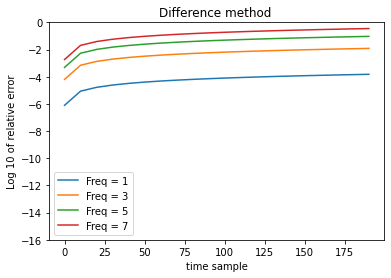}
 \caption{Difference accuracy }
\end{subfigure}
\centering
\hfill
\begin{subfigure}[b]{0.4\textwidth}
\includegraphics[width= 1.75in, height = 1.2 in]{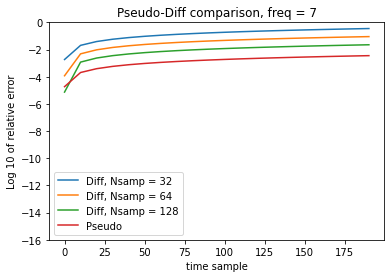}
\caption{Difference vs. Pseudospectral}
\end{subfigure}
\caption{Method accuracy}
\label{fig:Accur}
\end{figure}

The spectral method of subfigure $(a)$ advances solutions by time increment $h$ by multiplying Fourier components with eigenvalue $\lambda $ by
$\exp(i\lambda h)$, mimicking the exact solution using Fourier series.  Computational errors are smaller than $10^{-13}$;  these errors are largely independent of 
the frequency of the initial data.  The pseudospectral method advances solutions using \eqref{Askar}
after initially computing $\phi (t_1,x_n)$ from $\phi (0,x_n)$ using a two step Runge-Kutta method. 
The third scheme uses \eqref{Askar} together with the difference approximation \eqref{diff}.  Both the pseudospectral method and the difference operator method
show an expected frequency sensitivity.

In these experiments the difference scheme, with $N = 32$, runs about thirty times faster than
either the spectral or pseudospectral method.  Subfigure (d) of Figure \ref{fig:Accur} shows the results of trying to obtain pseudospectral accuracy with the difference method by increasing the number of samples on each edge.  The frequency case $m = 7$ is tested. To maintain stability, the ratio $h/ (x_{n+1}-x_n)^2$ is kept fixed.  The pseudospectral method is considerably more accurate even after the difference method has lost its speed advantage.

\begin{figure}[h!tbp] 
\centering
\begin{subfigure}[b]{0.4\textwidth}
\includegraphics[width= 1.75in, height = 1.2in]{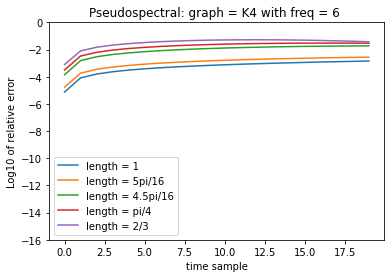}
\caption{Length dependence}
\end{subfigure}
\centering
\hfill
\begin{subfigure}[b]{0.4\textwidth}
\includegraphics[width= 1.75in, height = 1.2 in]{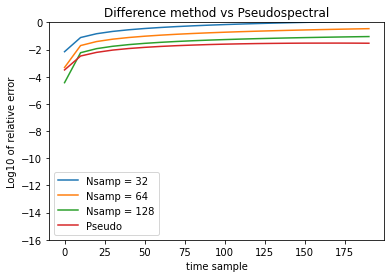}
\caption{Difference vs. Pseudospectral}
\end{subfigure}
\caption{Pseudospectral method}
\label{fig:K4}
\end{figure}

Figure \ref{fig:K4} subfigure (a) presents an accuracy test of the pseudospectral method for various edge lengths.
In this case \eqref{Schrod1} is solved on a complete graph with $4$ vertices, $K_4$.  
Three of the six edges have length $1$, while the remaining three have the same length $l$ with values $l = 2/3, \pi/4, 4.5\pi/16, 5\pi/16, 1$.
The initial function $\psi _0(x)$ has the value $\pm \sin(12\pi x/l)$ on the edges of the graph with length $l $, while $\psi _0(x)$ is zero on the other edges.
With these values $\psi _0(x)$ is an eigenfunction of $\dop = -\partial ^2/\partial x^2$ with eigenvalues $\lambda = (12 \pi /l)^2$.  
The relative error measure of \eqref{relerr} is used again.  Subfigure (b) compares the pseudospectral and difference methods with $l = \pi/4$.
After conversion to an equilateral graph, the pseudospectral method uses $N = 32$, while the difference operator is tested with
$N = 32,64,128$.  As above, the ratio $h/ (x_{n+1}-x_n)^2$ is kept fixed, and the pseudospectral method is more accurate.  

\begin{figure}[h!tbp] 
\centering
\begin{subfigure}[b]{0.3\textwidth}
\includegraphics[width= 1.5in, height = 1 in]{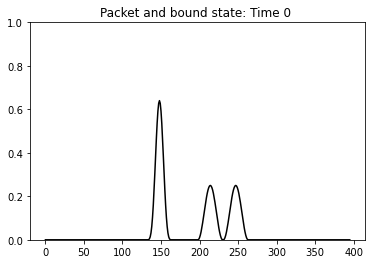}
\caption{Wave approaches }
\end{subfigure}
\centering
\hfill
\begin{subfigure}[b]{0.3\textwidth}
\includegraphics[width= 1.5in, height = 1 in]{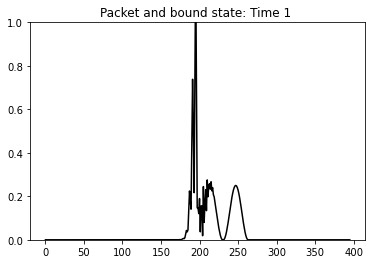}
\caption{Interaction begins}
\end{subfigure}
\centering
\hfill
 \begin{subfigure}[b]{0.3\textwidth}
 \includegraphics[width= 1.5in, height = 1 in]{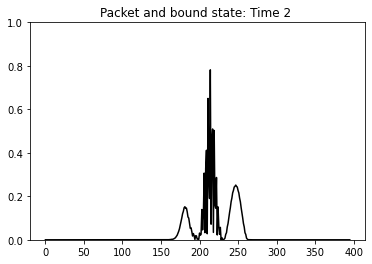}
 \caption{Strong interaction }
\end{subfigure}
\centering
\hfill
\begin{subfigure}[b]{0.3\textwidth}
\includegraphics[width= 1.5in, height = 1 in]{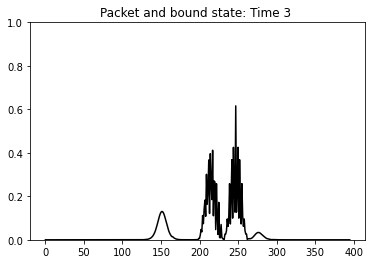}
\caption{Reflected packet separates}
\end{subfigure}
\centering
\hfill
 \begin{subfigure}[b]{0.3\textwidth}
 \includegraphics[width= 1.5in, height = 1 in]{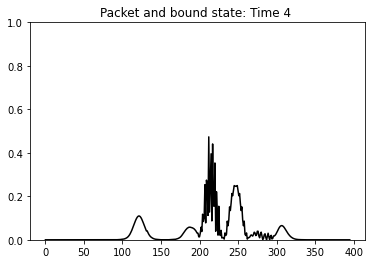}
 \caption{Complex reflection and transmission}
\end{subfigure}
\centering
\hfill
 \begin{subfigure}[b]{0.3\textwidth}
 \includegraphics[width= 1.5in, height = 1 in]{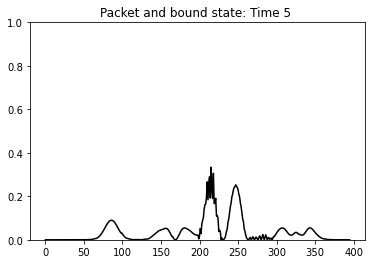}
 \caption{More reflection and transmission}
\end{subfigure}
\caption{Interaction of wave packet with bound state.}
\label{fig:Pack}
\end{figure}

\subsubsection{Three examples}

Figures  \ref{fig:Pack},  \ref{fig:Potential}, and \ref{fig:PackEQ} below illustrate applications of the QGFFT to the Schr\"odinger equation
on quantum graphs.  In each case the images show $|\psi (t,x)|^2$ for solutions of \eqref{Schrod1}; the sequence is 
sampled from a longer set describing a temporal evolution.  Figures \ref{fig:Pack} and \ref{fig:PackEQ} highlight a significant strength of the Fourier methods: the modeling of high frequency interference 
phenomena.

Figure \ref{fig:Pack} illustrates the use of the spectral method to compute the interaction of a wave packet with a graph supporting a geometric bound state.  
Such bound states were recently considered theoretically in \cite{Harrell}.
The total network, shown in Figure \ref{fig:Loop} subfigure (b), is a complete graph on $5$ vertices, $K_5$, with the $K_4$ subgraph viewed as an obstacle.
The edges of the $K_4$ have length $1$; the leftmost path joining edges $1$ and $6$ is longer.
   
The bound state $ \psi _0(x)$ is an eigenfunction of $\dop = - \partial ^2/\partial x ^2$, with 
values $\pm \sin(\pi x)$ on a cycle of four edges of the $K_4$ subgraph, labelled $2-5$ while $\psi _0(x)$ is zero on the other edges.
The values of $|\psi (t,x)|^2$ are shown along a path in $K_5$ which enters the $K_4$ subgraph along edge $1$,
continues along edges $2,3$, and then exits through edge $6$.
In addition to the superposition of the wave packet and the bound state on edges $2,3$, the subfigures show a series of packets generated 
by the interaction of the original wave packet with the $K_4$ subgraph.  
The multifeatured solution includes regions of high frequency interference in addition to reflected and transmitted packets.

Figure \ref{fig:Potential} uses the Figure \ref{fig:Loop} subfigure (a) graph to illustrate the evolution of  
bound state initial data (as above) when $\dop = - \partial ^2/\partial x ^2 + p(x)$.
The Strang splitting method is used, alternately advancing the solution with the spectral method for the zero potential,
and the exact solution of $ \partial \psi /\partial t = ip(x)\psi  $.    
The potential is zero except for the single edge $2$ where $p(x) = -25 (1 - \cos (2\pi x))$.  The initial bound state has support on the subgraph of edges $2-5$.
The values of $|\psi (t,x)|^2$ are shown along a path containing edges $1,2,3,6$.  Solution values on edges $4,5$ are displayed on the far right.
Since the initial data is no longer an eigenfunction of $\dop $ it 'leaks' away from the initial support.

\begin{figure}[h!tbp] 
\centering
\begin{subfigure}[b]{0.3\textwidth}
\includegraphics[width= 1.5in, height = 1 in]{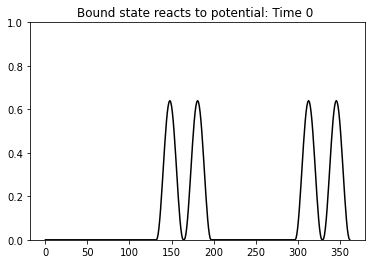}
\caption{Initial bound state }
\end{subfigure}
\centering
\hfill
\begin{subfigure}[b]{0.3\textwidth}
\includegraphics[width= 1.5in, height = 1in]{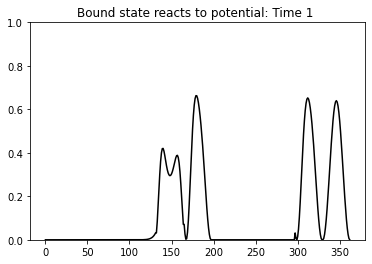}
\caption{Time 1}
\end{subfigure}
\centering
\hfill
 \begin{subfigure}[b]{0.3\textwidth}
 \includegraphics[width= 1.5in, height = 1 in]{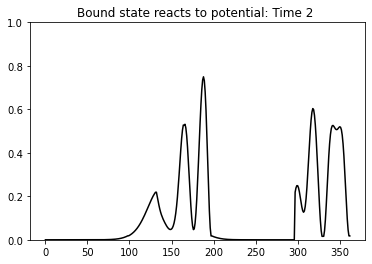}
 \caption{Time 2}
\end{subfigure}
\centering
\hfill
\begin{subfigure}[b]{0.3\textwidth}
\includegraphics[width= 1.5in, height = 1in]{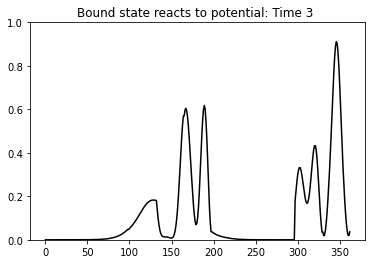}
\caption{Time 3}
\end{subfigure}
\centering
\hfill
 \begin{subfigure}[b]{0.3\textwidth}
 \includegraphics[width= 1.5in, height = 1 in]{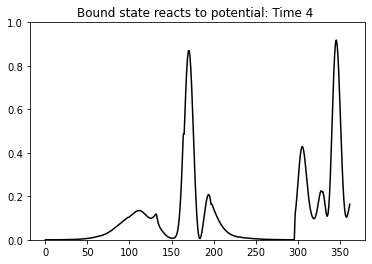}
 \caption{Time 4}
\end{subfigure}
\centering
\hfill
 \begin{subfigure}[b]{0.3\textwidth}
 \includegraphics[width= 1.5in, height = 1 in]{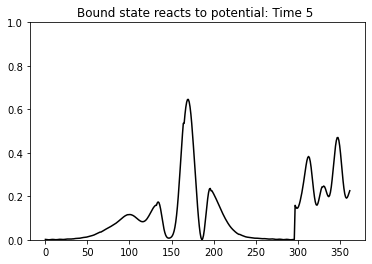}
 \caption{Time 5}
\end{subfigure}
\caption{Initial bound state with potential.}
\label{fig:Potential}
\end{figure}

The final computations, shown in  Figure \ref{fig:PackEQ}, employ the pseudospectral method.  The network is shown in Figure \ref{fig:Loop}, subfigure (a).
In this case edge $4$ has length $\pi /4$, edge $5$ has length $\sqrt{2}/2$, and edges $2$ and $3$ have length $1$.
The figures show a wave packet hitting the box from edge $1$.  The horizontal lines under the axis mark the positions of
edges $2,3$, and then at the right edges $4,5$.  

In subfigure (b) the wave packet is seen encountering the first vertex of the box.
There is then a strong interaction between the incoming and reflected packets.  
In subfigure (d) the $4,5$ packet, which travels over a shorter path, encounters the last vertex of the box before the $2,3$ packet does.
There is a strong interaction within edge $3$.
Subfigures (e) and (f) show reflected packets from the initial encounter, reflected packets in the box, and a packet which has passed through the box.

\begin{figure}[h!tbp] 
\centering
\begin{subfigure}[b]{0.3\textwidth}
\includegraphics[width= 1.5in, height = 1 in]{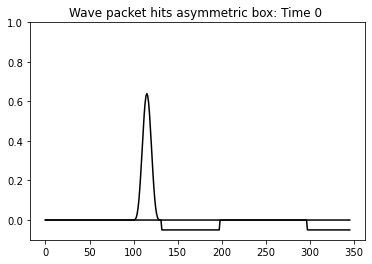}
\caption{Wave approaches }
\end{subfigure}
\centering
\hfill
\begin{subfigure}[b]{0.3\textwidth}
\includegraphics[width= 1.5in, height = 1in]{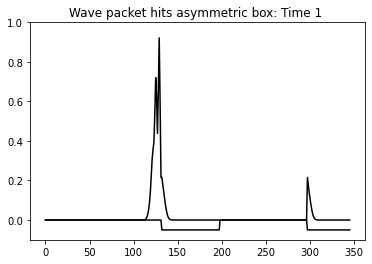}
\caption{Hitting leading edge}
\end{subfigure}
\centering
\hfill
 \begin{subfigure}[b]{0.3\textwidth}
 \includegraphics[width= 1.5in, height = 1 in]{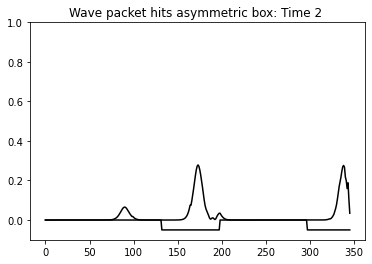}
 \caption{Front edge reflection}
\end{subfigure}
\centering
\hfill
\begin{subfigure}[b]{0.3\textwidth}
\includegraphics[width= 1.5in, height = 1in]{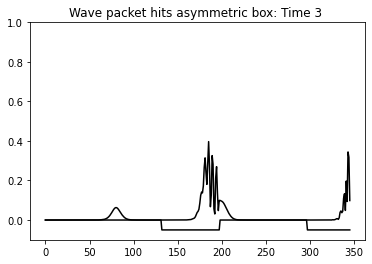}
\caption{Interaction of two waves}
\end{subfigure}
\centering
\hfill
 \begin{subfigure}[b]{0.3\textwidth}
 \includegraphics[width= 1.5in, height = 1 in]{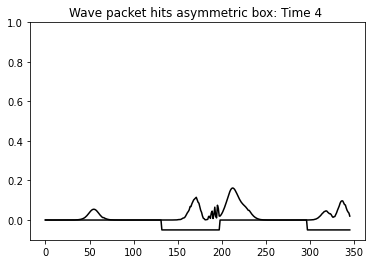}
 \caption{Reflections and transmissions}
\end{subfigure}
\centering
\hfill
 \begin{subfigure}[b]{0.3\textwidth}
 \includegraphics[width= 1.5in, height = 1 in]{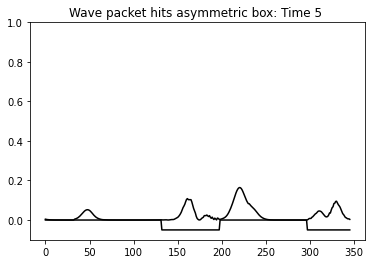}
 \caption{Later reflections and transmissions}
\end{subfigure}

\caption{Wave packet hits asymmetric box}
\label{fig:PackEQ}
\end{figure}

\bibliography{mybiblio}

\end{document}